\documentclass[letterpaper, 10 pt, conference]{ieeeconf}  

\IEEEoverridecommandlockouts                              
\usepackage{amsmath} 
\usepackage{amssymb}  

\usepackage{xcolor}
\usepackage{xparse}

\usepackage{mathtools,bm}
\usepackage{booktabs}
\usepackage{diagbox}
\usepackage{multirow}
\usepackage{balance}
\newtheorem{theorem}{Theorem}
\newtheorem{assumption}{Assumption}

\newtheorem{lemma}{Lemma}

\newtheorem{remark}{Remark}

\newcommand{\lucacolor}{black}
\NewDocumentCommand{\luca}{+m}{{\color{\lucacolor}#1}}

\usepackage{graphicx}
\usepackage{arydshln}
\usepackage{cite}
\makeatletter
\let\NAT@parse\undefined
\makeatother
\usepackage[colorlinks=true, linkcolor=blue, citecolor=blue, urlcolor=blue]{hyperref}

\title{\LARGE \bf
Learning  Over-Relaxation Policies for ADMM\\ with Convergence Guarantees

}

\author{Junan Lin, Paul J. Goulart, Luca Furieri
\thanks{The authors are with the Department of Engineering Science, University of Oxford, OX1 3PJ Oxford, UK. Emails: \texttt{\{junan.lin, paul.goulart, luca.furieri\}@eng.ox.ac.uk}}}

\begin{document}

\maketitle
\thispagestyle{empty}
\pagestyle{empty}

\begin{abstract}
The Alternating Direction Method of Multipliers (ADMM) is a widely used method for structured convex optimization, and its practical performance depends strongly on the choice of penalty and relaxation parameters. Motivated by settings such as Model Predictive Control (MPC), where one repeatedly solves related optimization problems with fixed structure and changing parameter values, we propose learning online updates of the relaxation parameter to improve performance on problem classes of interest. This choice is computationally attractive in OSQP-like architectures, since adapting relaxation does not trigger the matrix refactorizations associated with penalty updates. We establish convergence guarantees for ADMM with time-varying penalty and relaxation parameters under mild assumptions, and show on benchmark quadratic programs that the resulting learned policies improve both iteration count and wall-clock time over baseline OSQP.
\end{abstract}

\section{Introduction}

The Alternating Direction Method of Multipliers (ADMM) is one of the most widely used first-order methods for structured convex optimization. It combines inexpensive iterations, scalability to large problem dimensions, and broad applicability across signal processing, machine learning, control, and embedded optimization \cite{neal2011distributed,nishihara2015general}. For problems with favorable structure, ADMM often provides an attractive compromise between per-iteration cost and practical accuracy, and has therefore become a standard computational primitive in many repeated-solve applications \cite{neal2011distributed,stellato2020osqp}.

Consider the convex optimization problem
\begin{equation}
\begin{aligned}
\label{eq:opt_problem}
\min_{x,z}\quad & f(x)+g(z)\\
\text{s.t.}\quad & Ax + Bz = c,
\end{aligned}
\end{equation}
where $x \in \mathbb{R}^{n}$, $z \in \mathbb{R}^{m}$, $A \in \mathbb{R}^{p \times n}$, $B \in \mathbb{R}^{p \times m}$, $c \in \mathbb{R}^{p}$, and $f$ and $g$ are closed proper convex functions. In its basic form, ADMM generates iterates
\begin{subequations}
\label{eq:admm_intro}
\begin{align}
x_{k+1} &\in \arg\min_x f(x) + \frac{\rho_k}{2}\|Ax \!+ \!Bz_k \!-\! c \!+\! u_k\|_2^2, \label{eq:admm_intro_x}\\
z_{k+1} &\in \arg\min_z g(z) + \frac{\rho_k}{2}\|Ax_{k+1} \!+\! Bz \!-\! c \!+\! u_k\|_2^2, \label{eq:admm_intro_z}\\
u_{k+1} &= u_k + Ax_{k+1}+Bz_{k+1}-c, \label{eq:admm_intro_u}
\end{align}
where $\rho_k>0$ is the \emph{penalty parameter} and $u$ is the scaled dual variable \cite{neal2011distributed}. A standard extension is relaxed ADMM, in which 
the following term 
\begin{equation}
    \tilde{x}_{k+1} = \alpha_k Ax_{k+1} + (1-\alpha_k)(c-Bz_k), \label{eq:admm_intro_relax}
\end{equation}
\end{subequations}
is computed immediately after \eqref{eq:admm_intro_x}, and $\tilde{x}_{k+1}$ is used in place of $Ax_{k+1}$ in \eqref{eq:admm_intro_z} and \eqref{eq:admm_intro_u}.  The parameter  $\alpha_k\in(0,2)$ is the \emph{relaxation parameter}, and the combined choice of $\rho_k$ and $\alpha_k$ can have a major impact on practical convergence speed \cite{nishihara2015general,deng2016global}. 

The sensitivity of the algorithm performance to these hyperparameters is particularly important in applications where one repeatedly solves a family of related optimization problems with similar structure. A standard example is Model Predictive Control (MPC), where the same optimization problem is solved online for changing initial conditions. In such settings, the relevant performance criterion is often not the average behavior over a broad class of problems, but rather the performance on the narrower distribution of instances induced by the application. This motivates adapting the solver to the specific class of problems of interest.

This observation has motivated adaptive ADMM schemes. Adaptive ADMM (AADMM) updates the penalty parameter using spectral rules derived from local curvature estimates \cite{xu2017aadmm}. Adaptive Relaxed ADMM (ARADMM) extends this idea to jointly adapt scalar penalty and relaxation parameters \cite{xu2017aradmm}. More recently, superADMM proposed constraint-wise penalty adaptation for quadratic programs \cite{verheijen2025superadmm}. These methods show that adaptive hyperparameter updates can yield substantial performance gains. 

Most of the effort in this direction has focused on adaptation of the penalty parameter $\rho$, which brings computational disadvantages. Most notably, OSQP \cite{stellato2020osqp} is built around repeated solution of a linear system that is parametric in $\rho$. Updating $\rho$ modifies this linear system and can trigger a costly refactorization, which limits how frequently one should adapt it in practice. In contrast, updating the relaxation parameter $\alpha$ does not modify this factorization and can be done online at negligible additional cost. This makes $\alpha$ an attractive target for adaptation, especially in repeated-solve settings where wall-clock time matters as much as iteration~count.




More broadly, hyperparameter adaptation is aligned with the success of recent approaches based on learning-to-optimize (L2O). L2O seeks to improve algorithmic performance on a distribution of problems of interest by exploiting data from representative instances \cite{andrychowicz2016learning,chen2022learning,sambharya2024learning,sambharya2025data,sambharya2024learningwarm,martin2024learning,martin2025learning,martin2026learning}.
In the context of ADMM and related fixed-point methods, learning can substantially improve performance on optimization problem families arising in control \cite{sambharya2025data,martin2025learning}. At the same time, learned modifications must remain compatible with solver structure and convergence guarantees. In particular, learning high-dimensional perturbations of the iteration map can be difficult to train and may not generalize reliably \cite{martin2024learning,martin2025learning,martin2026learning}, whereas learning a low-dimensional solver parameter as per the RLQP approach of \cite{ichnowski2021acceleratingrlqp} can offer lighter-weight computations and higher transferability to new problem instances.

Motivated by these considerations, in this paper we investigate online adaptation of the ADMM relaxation parameter. We first derive convergence guarantees for ADMM with time-varying hyperparameters vectors (different parameters  $\rho_k^i$ and $\alpha_k^i$ for each row of $A$ in \eqref{eq:admm_intro_x}-\eqref{eq:admm_intro_relax}) for a general class of convex optimization problems under mild assumptions. We then demonstrate the practical potential of this idea on quadratic programs solved in OSQP-style form, where policies trained on representative problem instances map solver-state features to constraint-wise values of $\alpha_k^i$. In this implementation, adapting the over-relaxation parameters incurs negligible computational overhead and can be combined naturally with existing $\rho$-adaptation strategies.



\textbf{Contributions. } Our contribution is twofold. First, we prove asymptotic convergence of ADMM with time-varying diagonal penalty and relaxation matrices, including the case where both parameters are adapted independently across constraints. Second, we develop a learning-to-optimize pipeline for online adaptation of the ADMM relaxation parameter in OSQP. The resulting learned policy can improve iteration count and wall-clock time on representative problem families, while preserving the solver structure and avoiding the factorization overhead caused by frequent $\rho$ updates. Our experiments show that learned adaptation of ADMM relaxation parameters can outperform baseline OSQP across several benchmark families. In particular, noticeable wall-clock time reduction can be obtained even with fixed $\rho$, since adapting $\alpha$ improves convergence behavior without changing the linear system factorization.

\section{Main Results}\label{sec:convergence_theory}

We first present the theoretical result that supports the adaptive scheme proposed in this paper. Our goal is to adapt the ADMM relaxation parameter online while preserving convergence guarantees, even in the presence of updates for penalty parameter. Existing results do not cover the setting in which both parameters vary across iterations and constraints, while the result established in this section shows that per-constraint adaptation of $\alpha_k$ in \eqref{eq:admm_intro} is compatible with existing adaptive penalty rules such as the one in OSQP \cite{stellato2020osqp}. Asymptotic convergence is then guaranteed provided that relaxation and penalty parameter sequences satisfy suitable summable-change conditions.

We make the following assumptions throughout:

\begin{assumption}
\label{ass:well_defined}
The functions $f:\mathbb{R}^n \to \mathbb{R}\cup\{+\infty\}$ and $g:\mathbb{R}^m \to \mathbb{R}\cup\{+\infty\}$ are closed, proper, convex functions. 
\end{assumption}

\begin{assumption}
\label{ass:saddle}
The Lagrangian function
\begin{equation*}
L_0(x, z, \lambda) = f(x) + g(z) + \lambda^\top (Ax + Bz - c)
\end{equation*}
has a saddle point $(x^\star,z^\star,\lambda^\star)$, that is,
\begin{equation*}
L_0(x^\star, z^\star, \lambda) \le L_0(x^\star, z^\star, \lambda^\star) \le L_0(x, z, \lambda^\star)
\end{equation*}
for all $(x,z,\lambda)$.
\end{assumption}

Under Assumption~\ref{ass:saddle}, the pair $(x^\star,z^\star)$ is an optimal solution of \eqref{eq:opt_problem}, and the optimal value is given by
\begin{equation*}
p^\star := f(x^\star)+g(z^\star).
\end{equation*}

To state our main result, it is convenient to write the relaxed ADMM iterations using matrix-valued penalty and relaxation parameters. Specifically, we replace the scalar parameters $\rho_{k}$ and $\alpha_{k}$ by diagonal matrices $R_k$ and $\Gamma_k$, respectively,  whose diagonal entries represent parameters assigned separately to each constraint. The resulting relaxed ADMM iteration is
\begin{subequations}\label{eq:matrix-admm}
\begin{align}
\!\! x_{k+1} &\in \arg\min_x \; f(x) \!+\! \frac12 \bigl\|Ax \!+\! Bz_k \!-\! c \!+\! R_k^{-1}\lambda_k\bigr\|_{R_k}^2\!,
\label{eq:matrix-x-update}\\
\!\!\tilde x_{k+1} &:= \Gamma_k A x_{k+1} - (I-\Gamma_k)(Bz_k-c),
\label{eq:matrix-x-relaxed}\\
\!\!z_{k+1} &\in \arg\min_z \; g(z) \!+\! \frac12 \bigl\|\tilde x_{k+1} \!+\! Bz \!-\! c \!+\! R_k^{-1}\lambda_k\bigr\|_{R_k}^2\!,
\label{eq:matrix-z-update}\\
\!\!\lambda_{k+1} &:= \lambda_k + R_k\bigl(\tilde x_{k+1}+Bz_{k+1}-c\bigr).
\label{eq:matrix-lam-update}
\end{align}
\end{subequations}

When $R_k=\rho_k I$ and $\Gamma_k=\alpha_k I$, \eqref{eq:matrix-admm} reduces to the standard scalar-parameter relaxed ADMM iteration. Based on these iterations, we state an additional assumption on well-posedness of \eqref{eq:matrix-x-update} and \eqref{eq:matrix-z-update}.

\begin{assumption}
\label{ass:subproblems}
For every $k$, the ADMM subproblems \eqref{eq:matrix-x-update} and \eqref{eq:matrix-z-update} admit at least one minimizer in $\mathbb{R}^n$ and $\mathbb{R}^m$, respectively.
\end{assumption}

Assumption~\ref{ass:subproblems} is fairly mild, and it is satisfied whenever the sublevel sets of the cost functionals in \eqref{eq:matrix-x-update} and \eqref{eq:matrix-z-update} are bounded. 
Our main result is the following:

\begin{theorem}\label{thm:main}
Consider problem \eqref{eq:opt_problem} under Assumptions~\ref{ass:well_defined}--\ref{ass:subproblems}, and let $\{x_k,z_k,\lambda_k\}$ be generated by the matrix-valued relaxed ADMM iteration \eqref{eq:matrix-admm}. Assume that $R_k$ and $\Gamma_k$ are positive definite diagonal matrices satisfying
\begin{equation*}
\rho_{\min}I \preceq R_k \preceq \rho_{\max}I,
\qquad
\alpha_{\min}I \preceq \Gamma_k \preceq \alpha_{\max}I
\end{equation*}
for all $k$, where $0<\rho_{\min}\le \rho_{\max}<\infty$ and $0<\alpha_{\min}\le \alpha_{\max}<2$. Further assume that there exist nonnegative sequences $\{\underline{\eta}_k\}$, $\{\overline{\eta}_k\}$, $\{\underline{\xi}_k\}$, and $\{\overline{\xi}_k\}$, such that
$
\underline{\eta}_k < 1$ and 
$\underline{\xi}_k < 1
$
for all $k$, and
\begin{align}
(1-\underline{\eta}_k)R_k &\preceq R_{k+1} \preceq (1+\overline{\eta}_k)R_k, \label{eq:Rk_ineq}
\\
(1-\underline{\xi}_k)\Gamma_k &\preceq \Gamma_{k+1} \preceq (1+\overline{\xi}_k)\Gamma_k \label{eq:Gammak_ineq}
\end{align}
for all $k$, and that, with
\begin{equation*}
\theta_k^R := \max\Bigl\{\frac{\underline{\eta}_k}{1-\underline{\eta}_k},\,\overline{\eta}_k\Bigr\},
\qquad
\theta_k^\Gamma := \max\Bigl\{\frac{\underline{\xi}_k}{1-\underline{\xi}_k},\,\overline{\xi}_k\Bigr\},
\end{equation*}
one has $\sum_{k=0}^\infty (\theta_k^R+\theta_k^\Gamma)<\infty$. Then:
\begin{enumerate}
\item the primal residual $r_{k+1}:=Ax_{k+1}+Bz_{k+1}-c$ satisfies $\lim_{k\rightarrow \infty}r_{k+1}= 0$;
\item the dual residual $s_{k+1}:=A^\top R_k B(z_{k+1}-z_k)$ satisfies $\lim_{k\rightarrow \infty}s_{k+1}= 0$;
\item the objective value satisfies $\lim_{k\rightarrow \infty}p_k= p^\star$\,,
\end{enumerate}
where $p_k := f(x_k)+g(z_k)$ and $p^\star := f(x^\star)+g(z^\star)$.
\end{theorem}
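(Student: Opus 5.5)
We will follow the classical Lyapunov-type proof of Boyd et al.\ for ADMM convergence. It will be adapted in two ways: to the relaxed iteration, following Eckstein--Bertsekas and the relaxed analyses of \cite{nishihara2015general}, and to time-varying metrics, in the spirit of He--Yang style results on varying penalties. The central object is a weighted distance to the saddle point of Assumption~\ref{ass:saddle},
\begin{equation*}
V_k := \bigl\|\lambda_k-\lambda^\star\bigr\|_{(R_k\Gamma_k)^{-1}}^2 + \bigl\|B(z_k-z^\star)\bigr\|_{R_k\Gamma_k^{-1}}^2 .
\end{equation*}
Because all matrices are diagonal, they commute, so $R_k\Gamma_k$ and $R_k\Gamma_k^{-1}$ are well-defined positive definite weights. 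The bounds $\rho_{\min},\rho_{\max},\alpha_{\min},\alpha_{\max}$ make these weights uniformly equivalent to the Euclidean norm.

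\textbf{Step 1 (one-step descent at frozen parameters).} We use the optimality conditions of \eqref{eq:matrix-x-update} and \eqref{eq:matrix-z-update}. These give $0\in\partial f(x_{k+1})+A^\top(\lambda_k+R_k(Ax_{k+1}+Bz_k-c))$, and, after substituting \eqref{eq:matrix-lam-update}, $0\in\partial g(z_{k+1})+B^\top\lambda_{k+1}$. We combine them with the saddle inequalities $p_{k+1}-p^\star\ge -\lambda^{\star\top}r_{k+1}$ and the subgradient inequalities evaluated at $(x^\star,z^\star)$. Exactly as in the scalar proof, this yields
\begin{equation*}
V_k^{+} \le V_k - c_0\Bigl(\|\tilde x_{k+1}+Bz_{k+1}-c\|^2 + \|B(z_{k+1}-z_k)\|^2\Bigr).
\end{equation*}
Here $V_k^{+}$ denotes the same expression at index $k+1$ but still weighted by $R_k,\Gamma_k$. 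The constant $c_0>0$ depends only on $\rho_{\min}$ and on $\min\{\alpha_{\min},2-\alpha_{\max}\}$. The coordinatewise structure is what allows the scalar algebra to be carried out row by row. In particular, the factor $(2-\Gamma_k)\Gamma_k^{-1}$ that appears in the relaxed analysis stays uniformly positive definite.

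\textbf{Step 2 (metric change).} Next we bound $V_{k+1}\le(1+\theta_k^R)(1+\theta_k^\Gamma)^{\,c}V_k^{+}\le(1+C\theta_k)V_k^{+}$, with $\theta_k=\theta_k^R+\theta_k^\Gamma$. This follows directly from \eqref{eq:Rk_ineq} and \eqref{eq:Gammak_ineq}: an inverse weight changes by at most $1/(1-\underline\eta_k)=1+\underline\eta_k/(1-\underline\eta_k)$, and a direct weight changes by at most $1+\overline\eta_k$. This is exactly why $\theta_k^R$ and $\theta_k^\Gamma$ are defined as they are. Since $\prod_k(1+C\theta_k)<\infty$, a Robbins--Siegmund type argument shows that $V_k$ is bounded and convergent. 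Summing the descent terms then gives $\sum_k\|\tilde x_{k+1}+Bz_{k+1}-c\|^2<\infty$ and $\sum_k\|B(z_{k+1}-z_k)\|^2<\infty$.

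\textbf{Step 3 (residuals and objective).} The summable relaxed residual and $B(z_{k+1}-z_k)\to0$ combine through the identity
\begin{equation*}
\tilde x_{k+1}+Bz_{k+1}-c=\Gamma_k r_{k+1}+(I-\Gamma_k)B(z_{k+1}-z_k).
\end{equation*}
Since $\Gamma_k\succeq\alpha_{\min}I$, this gives $r_{k+1}\to0$. The dual residual satisfies $\|s_{k+1}\|\le\|A\|\rho_{\max}\|B(z_{k+1}-z_k)\|\to0$. For the objective, we take the lower bound $p_{k+1}-p^\star\ge-\lambda^{\star\top}r_{k+1}\to0$. The upper bound comes from the subgradient inequalities, which give $p_{k+1}-p^\star\le-\lambda_{k+1}^\top r_{k+1}+(\text{terms in } B(z_{k+1}-z_k))$. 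The multipliers $\lambda_{k+1}$ and $Bz_k$ are bounded because $V_k$ is bounded, so this also tends to zero.

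The main obstacle is Step~1: obtaining the exact descent inequality for the relaxed iteration with matrix $\Gamma_k$. The cross terms between $\lambda$ and $Bz$ must be absorbed using weights $\Gamma_k^{-1}$ chosen consistently with \eqref{eq:matrix-x-relaxed}. We would first attempt this through the equivalent Douglas--Rachford/averaged-operator form in the variable $w_k=\lambda_k+R_kBz_k$ (suitably scaled). In that form relaxation becomes a $\Gamma_k/2$-averaging of a firmly nonexpansive map in the $R_k^{-1}$ metric, and the descent inequality is immediate coordinatewise. Steps 2--3 would then be carried out in that variable.
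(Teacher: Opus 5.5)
\textbf{Gap in Step 1.} Your block-diagonal $V_k$ is not nonincreasing under the relaxed iteration when $\Gamma_k$ is not a multiple of the identity, which is exactly the per-constraint case the theorem is about. The ``row by row'' justification fails because the subgradient inequalities for $f$ and $g$ couple all rows through $A$ and $B$. Write $\mu_k=\lambda_k-\lambda^\star$, $w_k=B(z_k-z^\star)$ and let $y_k=\lambda_k+R_k(c-Bz_k)$ be the Douglas--Rachford state. Then $y_k-y^\star_{R_k}=\mu_k-R_kw_k$, so
\[
V_k=\|y_k-y^\star_{R_k}\|^2_{(R_k\Gamma_k)^{-1}}+2\langle \mu_k,\Gamma_k^{-1}w_k\rangle .
\]
Monotonicity of $\partial g$ only gives $\langle\mu_k,w_k\rangle\le 0$; the $\Gamma_k^{-1}$-weighted pairing has no sign. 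A concrete instance: take $f=0$, $A=0$, $B=I$, $c=0$, $g$ the indicator of $L^\perp$ with $L=\mathrm{span}\{(1,1)\}$, and $R_k=I$, $\Gamma_k=\mathrm{diag}(3/2,1/2)$, $\lambda^\star=0$, $z^\star=0$. From $z_0=0$, $\lambda_0=(0,\sqrt2)$, one step gives $\lambda_1=(1,1)/\sqrt2$ and $z_1=(1,-1)/\sqrt2$. The next step gives $z_2=0$ and $\lambda_2=\tfrac32\lambda_1$. So $V$ rises from $8/3$ to $3$ with frozen weights; here $\langle\mu_1,w_1\rangle=0$ but $\langle\mu_1,\Gamma^{-1}w_1\rangle=-2/3$. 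Scaling the initial data defeats any other choice of saddle point $\lambda^\star\in L$. Even for scalar $\Gamma_k=\alpha I$, $V_k$ can increase for small $\alpha$ on nonsmooth instances (e.g.\ $g$ a support function), so no $c_0$ depending on $\alpha_{\min}$ can be obtained this way. Your Robbins--Siegmund step and the boundedness used in Step 3 both rest on this inequality, so the argument does not close.

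\textbf{What is needed.} The fix is your closing fallback, done correctly, and it is the paper's route. Use the state $y_k=\lambda_k+R_k(c-Bz_k)$; note the sign, since with $\lambda_k+R_kBz_k$ the distance to the reference point also grows in the example. Use the metric $H_k=(R_k\Gamma_k)^{-1}$, not $R_k^{-1}$. Because $H_k\Gamma_k=R_k^{-1}$, the cross term in $\|u-\Gamma_kd\|^2_{H_k}$ is $-2\langle u,d\rangle_{R_k^{-1}}$, which firm nonexpansiveness of $I-T_k$ in $\|\cdot\|_{R_k^{-1}}$ controls; this gives Lemma~\ref{lem:averaged}. In the example, $\|y-y^\star\|^2_{H}$ drops from $4$ to $3$, while the unweighted $\|y-y^\star\|$ grows from $\sqrt2$ to $3/2$. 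So the descent is not ``immediate coordinatewise'' in the $R_k^{-1}$ metric.

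Your Step 2 must also change. In this variable a penalty update is not a mere reweighting: the state jumps by $(R_{k+1}-R_k)(c-Bz_{k+1})$ and the reference point $y^\star_{R_k}$ moves. The paper absorbs this with the bound $\|\sigma_{k+1}-\sigma^\star\|_{R_k}\le\sqrt{\alpha_{\max}}\,\|\tilde y_{k+1}-y^\star_{R_k}\|_{H_k}$, which follows from firm nonexpansiveness of $I-J_{R_k\mathcal{B}}$. This turns the jump into a factor $1+O(\theta_k^R)$ before the quasi-Fej\'er argument. With that in place, your Step 3 goes through essentially as written. Boundedness of $\lambda_k$ and $Bz_k$ then comes from boundedness of $\|y_k-y^\star_{R_k}\|_{H_k}$ through the same estimate.
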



Before proving Theorem~\ref{thm:main} we require several intermediate results. Section~\ref{sec:A} rewrites ADMM as a relaxed Douglas--Rachford iteration with a perturbation induced by the metric update; Section~\ref{sec:B} establishes a one-step descent estimate for the relaxed map; 
and Section~\ref{sec:C} shows that both the the relaxed Douglas--Rachford 
step and the metric-induced perturbation vanish asymptotically.
These results are then combined in Section~\ref{sec:D} to prove Theorem~\ref{thm:main}.

\subsection{ADMM as relaxed Douglas--Rachford splitting}
\label{sec:A}
We can rewrite the matrix-valued relaxed ADMM iteration \eqref{eq:matrix-admm} as a relaxed Douglas--Rachford splitting applied to the dual problem. This representation isolates the effect of the relaxation matrix $\Gamma_k$ from the effect of the time variation of the penalty matrix $R_k$, and will be the starting point for the descent estimate in the next subsection. Let $\sigma_k := c - Bz_k$. We introduce the state variables
\begin{equation}
y_{k+1} := \lambda_{k+1} + R_{k+1}\sigma_{k+1},
\quad
\tilde y_{k+1} := \lambda_{k+1} + R_k\sigma_{k+1}.
\label{eq:ytildey-def}
\end{equation}

The vector $\tilde y_{k+1}$ is the state produced by one relaxed Douglas--Rachford step in the current metric $R_k$ \cite{giselsson2016linear}, while $y_{k+1}$ is the actual successor state after the metric is updated from $R_k$ to $R_{k+1}$.

Following \cite{eckstein1992douglas,lorenz2025degenerate}, define the convex dual functions
\begin{equation*}
\tilde f(\lambda) := f^\star(-A^\top \lambda),
\qquad
\tilde g(\lambda) := g^\star(-B^\top \lambda) + c^\top \lambda,
\end{equation*}
and the corresponding maximal monotone operators
\begin{equation*}
\mathcal A := \partial \tilde f,
\qquad
\mathcal B := \partial \tilde g,
\end{equation*}
where $\partial$ denotes the subdifferential. The dual optimality condition can then be written as $0 \in \mathcal A(\lambda) + \mathcal B(\lambda)$. For each  $R_k\succ 0$, define the associated resolvents
\begin{equation*}
J_{R_k\mathcal A} := (I+R_k\mathcal A)^{-1},
\qquad
J_{R_k\mathcal B} := (I+R_k\mathcal B)^{-1},
\end{equation*}
and the Douglas--Rachford operator
\begin{equation}
T_k(y):=
J_{R_k\mathcal A}\bigl(2J_{R_k\mathcal B}(y)-y\bigr)
+
y-J_{R_k\mathcal B}(y).
\label{eq:Tk-def}
\end{equation}

When $\Gamma_k = I$, one iteration of \eqref{eq:matrix-admm} is equivalent to applying the Douglas--Rachford operator $T_k$ once to the dual inclusion above \cite{eckstein1992douglas,lorenz2025degenerate}. When $\Gamma_k$ is not necessarily the identity, a direct calculation using \eqref{eq:matrix-x-relaxed}--\eqref{eq:matrix-lam-update}, together with the commutativity of the diagonal matrices $\Gamma_k$ and $R_k$, yields
\begin{equation}
\tilde y_{k+1}
=
(I-\Gamma_k)y_k + \Gamma_k T_k(y_k)
=: S_k(y_k),
\label{eq:Sk-def}
\end{equation}
where $S_k$ denotes the relaxed Douglas--Rachford operator. We then have 
\begin{gather}
\tilde{y}_{k+1} - y_k = H_k^{-1} \, e_{k+1}, \label{eq:ytilde-diff}\\
y_{k+1} - \tilde{y}_{k+1}= E_k\,\sigma_{k+1}, \label{eq:actual-state}
\end{gather}
 where $H_k := \Gamma_k^{-1} R_k^{-1}$, $e_{k+1} := Ax_{k+1}+Bz_k-c$, and  $E_k := R_{k+1}-R_k$.

Equations \eqref{eq:Sk-def} and \eqref{eq:actual-state} show that the transition from $y_k$ to $y_{k+1}$ can be decomposed into two stages, as in \cite{lorenz2025degenerate}. First, the current state $y_k$ is mapped through the relaxed Douglas--Rachford operator $S_k$, producing the intermediate point $\tilde y_{k+1}$. Second, the penalty matrix is updated from $R_k$ to $R_{k+1}$, which introduces the perturbation term $E_k\sigma_{k+1}$ and yields the actual next state $y_{k+1}$.

\subsection{One-step descent for the relaxed DRS map}
\label{sec:B}

We now derive the one-step descent estimate for the relaxed Douglas--Rachford map $S_k$. The estimate shows that one application of $S_k$ decreases the distance to a reference fixed point by an amount controlled by the step $\tilde y_{k+1}-y_k$. 

Since $\mathcal{A}$ and $\mathcal{B}$ are maximal monotone, it follows from \cite[Lemma~2.2]{lorenz2025degenerate} that $J_{R_k\mathcal{A}}$, $J_{R_k\mathcal{B}}$, and $T_k$ are firmly nonexpansive in the norm $\|\cdot\|_{R_k^{-1}}$. Since the saddle point need not be unique, we fix an arbitrary saddle point $(x^\star,z^\star,\lambda^\star)$ of $L_0$ and define
\begin{equation*}
H_k := \Gamma_k^{-1}R_k^{-1}, ~~ \sigma^\star := c-Bz^\star = Ax^\star, ~~ y_{R_k}^\star := \lambda^\star + R_k \sigma^\star.
\end{equation*}

By the saddle-point condition, it holds that $y_{R_k}^\star \in \mathrm{Fix}(T_k)$, where $\mathrm{Fix}(T)$ denotes the set of fixed points for an operator $T$. By direct calculation through \eqref{eq:Sk-def} we can derive $\mathrm{Fix}(S_k) = \mathrm{Fix} (T_k)$, and therefore $y_{R_k}^\star \in \mathrm{Fix}(S_k)$.

\begin{remark}
The convergence claim in Theorem~\ref{thm:main} is stated only in terms of the primal residual, dual residual, and objective value, since $(x^\star,z^\star,\lambda^\star)$ need not be unique. In particular, we do not claim that $y_k$, $\tilde y_k$, or $y_{R_k}^\star$ converges to a specific value.
\end{remark}

\begin{lemma}\label{lem:averaged}
For each $k$, it holds that
\begin{equation}\label{eq:one-step}
\|\tilde{y}_{k+1}-y_{R_k}^\star\|_{H_k}^2 \le \|y_k-y_{R_k}^\star\|_{H_k}^2 - \kappa\,\|\tilde{y}_{k+1}-y_k\|_{H_k}^2
\end{equation}
with $\kappa := (2/\alpha_{\max} - 1) > 0$.
\end{lemma}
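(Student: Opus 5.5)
Write $y:=y_k$, $y^\star:=y_{R_k}^\star$, $M:=R_k^{-1}$, $\Gamma:=\Gamma_k$, so that $H_k=\Gamma^{-1}M$. Since $R_k$ and $\Gamma_k$ are both positive definite diagonal matrices, they commute, and $H_k$ is a diagonal positive definite matrix. The plan is to turn the firm nonexpansiveness of $T_k$ in $\|\cdot\|_M$ into a descent inequality for $S_k$ in the weighted metric $H_k$. This is the usual averaged-operator computation, but carried out componentwise for a diagonal relaxation matrix.

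\textbf{Step 1: firm nonexpansiveness.} Since $T_k$ is firmly nonexpansive in $\|\cdot\|_M$ (by \cite[Lemma~2.2]{lorenz2025degenerate}) and $y^\star\in\mathrm{Fix}(T_k)$, we have
\begin{equation*}
\|T_k y - y^\star\|_M^2 \le \|y-y^\star\|_M^2 - \|y - T_k y\|_M^2 .
\end{equation*}
Expanding $T_k y - y^\star = (y-y^\star) - (y-T_k y)$ gives the equivalent inner-product form
\begin{equation*}
\langle y-y^\star,\, y-T_k y\rangle_M \ge \|y-T_k y\|_M^2 .
\end{equation*}

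\textbf{Step 2: expand the relaxed step.} Let $d := y - T_k y$. By \eqref{eq:Sk-def}, $\tilde y_{k+1} - y = -\Gamma d$. Then
\begin{equation*}
\|\tilde y_{k+1}-y^\star\|_{H_k}^2 = \|y-y^\star\|_{H_k}^2 - 2\langle y-y^\star, \Gamma d\rangle_{H_k} + \|\Gamma d\|_{H_k}^2 .
\end{equation*}
Because $H_k\Gamma = \Gamma^{-1}M\Gamma = M$ (all matrices diagonal), the cross term equals $2\langle y-y^\star, d\rangle_M$. By Step 1 this is at least $2\|d\|_M^2$. The last term is $\|\Gamma d\|_{H_k}^2 = d^\top \Gamma M d$. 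Hence
\begin{equation*}
\|\tilde y_{k+1}-y^\star\|_{H_k}^2 \le \|y-y^\star\|_{H_k}^2 - d^\top(2I-\Gamma)M d .
\end{equation*}

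\textbf{Step 3: compare with the step length.} It remains to show
\begin{equation*}
d^\top(2I-\Gamma)Md \ge \kappa\,\|\tilde y_{k+1}-y\|_{H_k}^2 = \kappa\, d^\top \Gamma M d .
\end{equation*}
Since all matrices are diagonal, this reduces to the entrywise inequality $2-\gamma_i \ge \kappa\gamma_i$, i.e.\ $\gamma_i \le 2/(1+\kappa)$. With $\kappa = 2/\alpha_{\max}-1$ we have $2/(1+\kappa) = \alpha_{\max}$, and $\gamma_i\le\alpha_{\max}$ holds by assumption. Positivity of $\kappa$ follows from $\alpha_{\max}<2$.

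\textbf{Main obstacle.} The step that needs care is the metric bookkeeping in Step 2. The identity $H_k\Gamma_k = R_k^{-1}$ relies on the commutativity of the diagonal matrices, and it is exactly what makes $H_k$, rather than $R_k^{-1}$, the right norm for the descent estimate. I would verify this identity explicitly before anything else. Once it holds, Step 3 is a purely scalar comparison.
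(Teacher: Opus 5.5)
Your proposal is correct and follows essentially the same argument as the paper. Both proofs use firm nonexpansiveness of $T_k$ in the $R_k^{-1}$-metric, expand the relaxed step in the $H_k$-norm via the identity $H_k\Gamma_k = R_k^{-1}$, and close with $\Gamma_k \preceq \alpha_{\max} I$. The differences are cosmetic: you specialize to the fixed point $y_{R_k}^\star$ from the outset rather than proving the two-point inequality first, and you fold the paper's two separate $\alpha_{\max}$ bounds into the single entrywise comparison $2-\gamma_i \ge \kappa\gamma_i$.
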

\vspace{8pt}

\begin{proof}
The proof adapts the one-step descent argument from \cite[Lemma~2.2 and Lemma~2.5]{lorenz2025degenerate} to the relaxed setting with per-constraint relaxation parameters.
Write $Q_k = R_k^{-1}$. Since $R_k$ and $\Gamma_k$ are diagonal, $H_k\Gamma_k = Q_k$. Let $F_k := I-T_k$. Since $T_k$ is firmly nonexpansive in $\|\cdot\|_{Q_k}$, so is $F_k$, i.e.\
\begin{equation*}
\|F_k(x)-F_k(y)\|_{Q_k}^2 \le \langle x-y,\,F_k(x)-F_k(y)\rangle_{Q_k}
\end{equation*}
for all $x,y$. Let $u := x-y$ and $d := F_k(x)-F_k(y)$. Since $S_k = I-\Gamma_kF_k$,
\begin{align*}
\|S_k(x)-S_k(y)\|_{H_k}^2
&= \|u-\Gamma_k d\|_{H_k}^2 \\
&= \|u\|_{H_k}^2 + \|\Gamma_k d\|_{H_k}^2 - 2\langle u,\Gamma_k d\rangle_{H_k}.
\end{align*}
Using $H_k\Gamma_k = Q_k$ and $\Gamma_k\preceq \alpha_{\max}I$, we obtain
\begin{equation*}
\|\Gamma_k d\|_{H_k}^2 = d^\top Q_k \Gamma_k d \le \alpha_{\max}\|d\|_{Q_k}^2.
\end{equation*}
Using firm nonexpansiveness of $F_k$, we also have
\begin{equation*}
\langle u,\Gamma_k d\rangle_{H_k} = \langle u,d\rangle_{Q_k}
\ge \|d\|_{Q_k}^2
\ge \alpha_{\max}^{-1}\|\Gamma_k d\|_{H_k}^2.
\end{equation*}
Substituting these bounds yields
\begin{align*}
\|S_k(x)-S_k(y)\|_{H_k}^2
\le {} & \|x-y\|_{H_k}^2 \\
& {} - \kappa\,\|(I-S_k)(x)-(I-S_k)(y)\|_{H_k}^2,
\end{align*}
with $\kappa = 2/\alpha_{\max}-1$. Applying this inequality with $x=y_k$ and $y=y_{R_k}^\star\in \mathrm{Fix}(S_k)$ gives \eqref{eq:one-step}. 
\end{proof}

Lemma~\ref{lem:averaged} gives a one-step descent estimate for the relaxed Douglas--Rachford map with the metric and reference point frozen at iteration $k$. The remaining issue is that both quantities vary with $k$ through $R_k$ and $\Gamma_k$.

\subsection{Asymptotic regularity and vanishing perturbation}
\label{sec:C}
We next show that both the iterate difference \eqref{eq:ytilde-diff} and the perturbation \eqref{eq:actual-state} vanish asymptotically.


\begin{lemma}\label{lem:quasi-fejer}
The iteration \eqref{eq:ytilde-diff} is asymptotically regular and the perturbation \eqref{eq:actual-state} is vanishing, in the sense that
\begin{equation}\label{eq:ytilde-vanish}
\|\tilde{y}_{k+1}-y_k\|_{H_k}\to 0, \quad
y_{k+1}-\tilde{y}_{k+1} = E_k\sigma_{k+1}\to 0.
\end{equation}
\end{lemma}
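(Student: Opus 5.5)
We will prove the statement with a quasi-Fejér argument. The key quantity is the Lyapunov sequence $V_k := \|y_k - y_{R_k}^\star\|_{H_k}^2$. The aim is a recursion of the form
\begin{equation*}
V_{k+1} \le (1+c\,\delta_k)\,V_k - \kappa' \|\tilde y_{k+1}-y_k\|_{H_k}^2 + c\,\delta_k,
\end{equation*}
with $\delta_k := \theta_k^R+\theta_k^\Gamma$ summable and constants $c,\kappa'>0$. The Robbins--Siegmund lemma then gives that $V_k$ converges, hence stays bounded, and that $\sum_k \|\tilde y_{k+1}-y_k\|_{H_k}^2<\infty$. The latter yields the first claim of \eqref{eq:ytilde-vanish}.

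\textbf{Step 1 (change of metric).} Since all matrices are diagonal, \eqref{eq:Rk_ineq}--\eqref{eq:Gammak_ineq} give $H_{k+1} = \Gamma_{k+1}^{-1}R_{k+1}^{-1} \preceq (1-\underline\xi_k)^{-1}(1-\underline\eta_k)^{-1} H_k \preceq (1+\theta_k^\Gamma)(1+\theta_k^R)H_k$. Because $\prod_k(1+\delta_k)<\infty$, all the metrics $H_k$ are uniformly equivalent to the Euclidean norm, with constants depending only on $\rho_{\min},\rho_{\max},\alpha_{\min},\alpha_{\max}$ and this product. This lets us pass norms between iterations at a multiplicative cost of $1+O(\delta_k)$.

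\textbf{Step 2 (perturbation and moving reference point).} We write
\begin{equation*}
y_{k+1}-y^\star_{R_{k+1}} = (\tilde y_{k+1}-y^\star_{R_k}) + E_k(\sigma_{k+1}-\sigma^\star),
\end{equation*}
using \eqref{eq:actual-state} and $y^\star_{R_{k+1}}-y^\star_{R_k}=E_k\sigma^\star$. The diagonal structure gives $\|E_k\| \le \theta_k^R \|R_k\|$, since $-\underline\eta_k R_k \preceq E_k \preceq \overline\eta_k R_k$ and $\underline\eta_k\le \theta_k^R$. It remains to bound $\sigma_{k+1}-\sigma^\star$ by the state. From \eqref{eq:ytilde-def} we have $\tilde y_{k+1}-y^\star_{R_k} = (\lambda_{k+1}-\lambda^\star) + R_k(\sigma_{k+1}-\sigma^\star)$. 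Mirroring the unrelaxed DRS identity, $\lambda_{k+1}$ and $R_k\sigma_{k+1}$ should correspond to the two resolvent components (a $J_{R_k\mathcal B}$ output and its complement) of the same state. Firm nonexpansiveness of $J_{R_k\mathcal B}$ in $\|\cdot\|_{Q_k}$ then gives
\begin{equation*}
\|R_k(\sigma_{k+1}-\sigma^\star)\|_{Q_k}\le \|\tilde y_{k+1}-y^\star_{R_k}\|_{Q_k}.
\end{equation*}
This yields $\|E_k(\sigma_{k+1}-\sigma^\star)\|_{H_k}\le C\theta_k^R \|\tilde y_{k+1}-y^\star_{R_k}\|_{H_k}$.

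\textbf{Step 3 (assembling the recursion).} Combining Step 2 with Lemma~\ref{lem:averaged} and the triangle inequality gives
\begin{equation*}
\|y_{k+1}-y^\star_{R_{k+1}}\|_{H_k}\le (1+C\theta_k^R)\|\tilde y_{k+1}-y^\star_{R_k}\|_{H_k}.
\end{equation*}
Squaring, switching to $H_{k+1}$ via Step 1, and inserting \eqref{eq:one-step} produces
\begin{equation*}
V_{k+1}\le (1+C'\delta_k)\bigl(V_k-\kappa\|\tilde y_{k+1}-y_k\|_{H_k}^2\bigr).
\end{equation*}
This is of Robbins--Siegmund form, even without the additive term. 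Boundedness of $V_k$ then bounds $\tilde y_{k+1}-y^\star_{R_k}$, and hence $\sigma_{k+1}$, uniformly. Consequently $\|E_k\sigma_{k+1}\|\le \theta_k^R\rho_{\max}\sup_j\|\sigma_j\|\to 0$, because the summable sequence $\theta_k^R$ tends to zero. This is the second claim of \eqref{eq:ytilde-vanish}.

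\textbf{Main obstacle.} The delicate point is Step 2, namely justifying that $\sigma_{k+1}-\sigma^\star$ is controlled by $\tilde y_{k+1}-y^\star_{R_k}$ in the relaxed setting. In plain DRS this is immediate from the resolvent representation, but with $\Gamma_k\neq I$ the point $\tilde y_{k+1}$ is a convex-type combination rather than a resolvent argument. If the identification fails, the fallback is to bound $\sigma_{k+1}$ through $y_k$ and $\tilde y_{k+1}-y_k$ instead, using $\tilde y_{k+1}=y_k+H_k^{-1}e_{k+1}$ together with the resolvent structure at $y_k$. Either route gives a bound linear in $\sqrt{V_k}$ plus a constant, and a linear bound suffices for Robbins--Siegmund.
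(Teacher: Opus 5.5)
Your proposal is correct and follows the paper's proof essentially step for step: the same Lyapunov quantity $\|y_k-y^\star_{R_k}\|_{H_k}^2$, the same decomposition with the moving reference point, the same firm-nonexpansiveness bound on $\sigma_{k+1}-\sigma^\star$, and the same metric-drift factor, with Robbins--Siegmund taking the place of the paper's explicit $\beta_k$-weighting. The step you flagged as the main obstacle is not actually a problem. The $z$-update optimality condition gives $-B^\top\lambda_{k+1}\in\partial g(z_{k+1})$, hence $\sigma_{k+1}\in\mathcal B(\lambda_{k+1})$, and the definition $\tilde y_{k+1}=\lambda_{k+1}+R_k\sigma_{k+1}$ then yields $\lambda_{k+1}=J_{R_k\mathcal B}(\tilde y_{k+1})$ directly, independently of $\Gamma_k$, so no fallback is needed.
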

\vspace{8pt}


\begin{proof}
Define $a_k:=\|y_k-y_{R_k}^\star\|_{H_k}^2$ and $b_k:=\kappa\|\tilde{y}_{k+1}-y_k\|_{H_k}^2$. 
From~\eqref{eq:actual-state}, we obtain 
\begin{equation}\label{eq:y_ystar}
y_{k+1}-y_{R_{k+1}}^\star = (\tilde{y}_{k+1}-y_{R_k}^\star)+E_k(\sigma_{k+1}-\sigma^\star).
\end{equation}

From the $z$-optimality condition of~\eqref{eq:matrix-z-update} it holds that $\sigma_{k+1}\in\mathcal{B}(\lambda_{k+1})$, so $\tilde{y}_{k+1} = \lambda_{k+1}+R_k\sigma_{k+1}$ gives $\lambda_{k+1}=J_{R_k\mathcal{B}}(\tilde{y}_{k+1})$. Similarly, from the saddle-point conditions, $\sigma^\star\in\mathcal{B}(\lambda^\star)$ gives $\lambda^\star = J_{R_k\mathcal{B}}(y_{R_k}^\star)$. 

Since $J_{R_k\mathcal{B}}$ is firmly nonexpansive in $\|\cdot\|_{R_k^{-1}}$, so is $(I-J_{R_k\mathcal{B}})$. Noting $(I-J_{R_k\mathcal{B}})(\tilde{y}_{k+1}) = R_k\sigma_{k+1}$ and $(I-J_{R_k\mathcal{B}})(y_{R_k}^\star)=R_k\sigma^\star$, we get
\begin{equation}\label{eq:sigma_ineq}
\begin{aligned}
\|\sigma_{k+1}-\sigma^\star\|_{R_k} &= \|R_k(\sigma_{k+1}-\sigma^\star)\|_{R_k^{-1}}\\ 
&\le \sqrt{\alpha_{\max}}\,\|\tilde{y}_{k+1}-y_{R_k}^\star\|_{H_k},
\end{aligned}
\end{equation}
where the inequality uses firm nonexpansiveness of $(I-J_{R_k\mathcal{B}})$ and $R_k^{-1} = \Gamma_k H_k \preceq \alpha_{\max}\, H_k$.


Since $E_k$ satisfies $-\theta_k^R R_k\preceq E_k\preceq \theta_k^R R_k$, with \eqref{eq:y_ystar} and \eqref{eq:sigma_ineq} we derive
\begin{equation}\label{eq:perturbation}
\begin{aligned}
\|y_{k+1}-y_{R_{k+1}}^\star\|_{H_k} &\le  \| \tilde{y}_{k+1}-y_{R_k}^\star\|_{H_k} + \| E_k(\sigma_{k+1}-\sigma^\star) \|_{H_k}\\
&\le (1+c_1\theta_k^R)\,\|\tilde{y}_{k+1}-y_{R_k}^\star\|_{H_k},
\end{aligned}
\end{equation}
with $c_1=\sqrt{\alpha_{\max}/\alpha_{\min}}$.
%
From \eqref{eq:Rk_ineq} and \eqref{eq:Gammak_ineq}, we have
\begin{gather*}
R_{k+1}^{-1}\preceq \frac{1}{1-\underline{\eta}_k}R_k^{-1}
\preceq (1+\theta_k^R)R_k^{-1}, \\
\Gamma_{k+1}^{-1}\preceq \frac{1}{1-\underline{\xi}_k}\Gamma_k^{-1}
\preceq (1+\theta_k^\Gamma)\Gamma_k^{-1}.
\end{gather*}
Since $R_k$ and $\Gamma_k$ are diagonal, all of these matrices commute. Hence
\begin{equation}\label{eq:metric_drift}
H_{k+1}\preceq (1+\delta_k)H_k,
\end{equation}
where $\delta_k:=\theta_k^R+\theta_k^\Gamma+\theta_k^R\theta_k^\Gamma$ is nonnegative and summable.

Combining Lemma \ref{lem:averaged},~\eqref{eq:perturbation}, and \eqref{eq:metric_drift}, we can prove:
\[
a_{k+1} \le (1+\delta_k)(1+\omega_k)(a_k-b_k) \le (1+\varepsilon_k)a_k - b_k,
\]
where $\omega_k := 2c_1\theta_k^R + c_1^2(\theta_k^R)^2$ and $\varepsilon_k := \omega_k+\delta_k+\omega_k\delta_k$ are nonnegative and summable.

Define $\beta_0 := 1$, $\beta_{k+1} := \beta_k/(1+\varepsilon_k)$, and $w_k := \beta_k a_k$. Then $w_{k+1}\le w_k - \beta_{k+1}b_k$, so $\{w_k\}$ is nonincreasing and bounded, where $\{ w_k\}$ denotes the sequence of values $w_t$ for each $t \in \mathbb{N}$.


Since $\{ \varepsilon_k \}$ is nonnegative and summable, $\{\beta_k\}$ is lower bounded by a positive number, and therfore $\sum_k b_k<\infty$ and $b_k \to 0$. From the definition of $b_k$, we get $\| \tilde{y}_{k+1}-y_k \|_{H_k} \to 0$. This proves the first claim of the lemma.

Because $\{\beta_k\}$ is lower bounded by a positive number and $\{w_k\}$ is bounded, $\{ a_k \}$ is bounded. Note that we cannot ensure convergence of $\{ a_k \}$. Then Lemma \ref{lem:averaged} gives $\|\tilde{y}_{k+1}-y_{R_k}^\star\|_{H_k}^2\le a_k$, so $\{\tilde{y}_{k+1}\}$ is bounded. Together with \eqref{eq:sigma_ineq}, we get $\{\sigma_{k+1}\}$ is bounded. From $-\theta_k^R R_k \preceq E_k \preceq \theta_k^R R_k$ with $R_k\preceq \rho_{\max}I$, we have
\[
\|E_k\|\le \theta_k^R \|R_k\|\le \rho_{\max}\theta_k^R \to 0.
\]

Since $\{\sigma_{k+1}\}$ is bounded and $y_{k+1}-\tilde{y}_{k+1} = E_k\sigma_{k+1}$, it follows that
\[
y_{k+1}-\tilde y_{k+1}\to 0.
\]

This proves the second claim. 
\end{proof}

\subsection{Proof of Theorem~\ref{thm:main}}
\label{sec:D}
We now combine the reformulation from Section~\ref{sec:A}, the descent estimate from Section~\ref{sec:B}, and the asymptotic regularity and vanishing perturbation results from Section~\ref{sec:C} to prove our main result:


\begin{proof}[Theorem~\ref{thm:main}]
The proof is adapted from \cite[Appendix A]{neal2011distributed}.
We first prove convergence of the primal and dual residuals, and then of the objective value. Define $d_k:=\sigma_k-\sigma_{k+1}$,
so that
\begin{equation*}
r_{k+1}=Ax_{k+1}+Bz_{k+1}-c=e_{k+1}+d_k.
\end{equation*}

From \eqref{eq:ytilde-diff} and Lemma~\ref{lem:quasi-fejer}, we have $\|e_{k+1}\|_{\Gamma_k R_k}\to 0$. Since $\Gamma_k R_k\succeq \alpha_{\min}\rho_{\min}I$, it follows that $e_{k+1}\to 0$.

We next show that $d_k\to 0$. The optimality conditions of \eqref{eq:matrix-z-update} give $-B^\top\lambda_{k+1}\in\partial g(z_{k+1})$ and $-B^\top\lambda_k\in\partial g(z_k)$. By monotonicity of $\partial g$,
\begin{equation*}
\langle z_{k+1}-z_k,\,-B^\top(\lambda_{k+1}-\lambda_k)\rangle \ge 0,
\end{equation*}
that is, $\langle d_k,\lambda_{k+1}-\lambda_k\rangle\le 0$. Using \eqref{eq:matrix-lam-update}, namely $\lambda_{k+1}-\lambda_k=R_k(\Gamma_ke_{k+1}+d_k)$, we obtain
\begin{align*}
\|d_k\|_{R_k}^2
&\le -\langle d_k,\Gamma_ke_{k+1}\rangle_{R_k}
\le \|d_k\|_{R_k}\,\|\Gamma_ke_{k+1}\|_{R_k} \\
&\le \alpha_{\max}\|d_k\|_{R_k}\|e_{k+1}\|_{R_k},
\end{align*}
where the last inequality uses $\Gamma_k\preceq \alpha_{\max}I$. Hence $\|d_k\|_{R_k}\le \alpha_{\max}\|e_{k+1}\|_{R_k}\to 0$, and since $R_k\succeq \rho_{\min}I$, we conclude that $d_k\to 0$. Therefore $r_{k+1}=e_{k+1}+d_k\to 0$, and the dual residual $s_{k+1}=A^\top R_k d_k\to 0$ because $\{R_k\}$ is uniformly bounded.

We now prove convergence of the objective value. By the saddle-point inequality,
\begin{equation*}
p^\star \le L_0(x_{k+1},z_{k+1},\lambda^\star)=p_{k+1}+\langle \lambda^\star,r_{k+1}\rangle.
\end{equation*}

Since $r_{k+1}\to 0$, this gives
\begin{equation}
\label{eq:p_pstar_lower}
\liminf p_{k+1}\ge p^\star.
\end{equation}

For the upper bound, the optimality condition of \eqref{eq:matrix-x-update} reads $0\in \partial f(x_{k+1})+A^\top(\lambda_k+R_k e_{k+1})$. Using again \eqref{eq:matrix-lam-update}, this becomes $0\in \partial f(x_{k+1})+A^\top\lambda_{k+1}+A^\top\mu_{k+1}$, where
\begin{equation*}
\mu_{k+1}:=R_k\bigl((I-\Gamma_k)e_{k+1}-d_k\bigr)\to 0.
\end{equation*}

The optimality condition of \eqref{eq:matrix-z-update} gives $0\in \partial g(z_{k+1})+B^\top\lambda_{k+1}$. By convexity of $f$ and $g$,
\begin{align*}
f(x_{k+1})-f(x^\star) &\le -\langle \lambda_{k+1}+\mu_{k+1},\,Ax_{k+1}-Ax^\star\rangle,\\
g(z_{k+1})-g(z^\star) &\le -\langle \lambda_{k+1},\,Bz_{k+1}-Bz^\star\rangle.
\end{align*}

Adding and using $Ax^\star+Bz^\star=c$ yields
\begin{equation}
\label{eq:p_pstar}
p_{k+1}-p^\star \le -\langle \lambda_{k+1},r_{k+1}\rangle - \langle \mu_{k+1},Ax_{k+1}-Ax^\star\rangle.
\end{equation}

It remains to show that the right-hand side vanishes asymptotically. From the proof of Lemma~\ref{lem:quasi-fejer}, the sequence $\{\sigma_k\}$ is bounded. Since $\lambda_k=y_k-R_k\sigma_k$ and $\{R_k\}$ is uniformly bounded, $\{\lambda_k\}$ is bounded. Also, $Ax_{k+1}=\sigma_k+e_{k+1}$, so $\{Ax_{k+1}\}$ is bounded because $\{\sigma_k\}$ is bounded and $e_{k+1}\to 0$. Therefore, using $r_{k+1}\to 0$ and $\mu_{k+1}\to 0$ in \eqref{eq:p_pstar}, we obtain $\limsup p_{k+1}\le p^\star$. Together with \eqref{eq:p_pstar_lower}, this proves that $p_{k+1}\to p^\star$. 
\end{proof}


\section{Learning over-relaxation for OSQP}\label{sec:method}

We now turn to \luca{designing time-varying per-constraint over-relaxation parameters with the goal of enhancing the performance of } OSQP \cite{stellato2020osqp} -- a well-established algorithm that solves convex quadratic
programs by reformulating them as a consensus problem and applying ADMM.  Specifically, the OSQP iteration is an instance of the matrix-valued
relaxed ADMM~\eqref{eq:matrix-x-update}--\eqref{eq:matrix-lam-update}
in which $f$ encodes the quadratic objective together with a
linear-equality constraint via an indicator function,
$g$ is the indicator of a convex constraint set~$\mathcal{C}$,
$A=I$, and $B=-I$.
The penalty matrix~$R_k$ is diagonal with entries determined by a
scalar parameter~$\rho_k$ that OSQP updates only when a heuristic
triggering condition is met. \luca{The relaxation matrix $\Gamma_k$ is instead not chosen adaptively, and it is set equal to $1.6 I$. }

\luca{Since changing the penalty matrix $R_k$ triggers a costly matrix refactorization, in this paper we  focus on designing the relaxation matrix $ \Gamma_k $. Updating $\Gamma_k$ at every iteration does not require additional matrix factorizations and can therefore be performed online at negligible additional cost}.  \luca{Inspired by L2O approaches \cite{chen2022learning}, we propose learning the sequence of over-relaxation parameters by learning a \emph{policy} that, at each iteration $k$, maps a set of observed features to a chosen over-relaxation matrix $\Gamma_k$. }

Specifically, we propose a supervised-learning procedure, where for each quadratic program in the example set the primal-dual optimal pair $(x^\star,\lambda^\star)$ is known and used for training. At the current iteration $k \in \mathbb{N}$,  \luca{a neural-network policy determines a relaxation matrix $\Gamma_k$ based on a set of features, whose content we detail later. This choice of $\Gamma_k$ is kept fixed for the next $t$ iterations of ADMM, when a new relaxation matrix $\Gamma_{k+t}$ is chosen by the policy. In order to evaluate the performance of our algorithm for a given choice of policy parameters, and then backpropagate through such parameters, we define the loss incurred by the algorithm at iteration $k$ as} 

\luca{
\begin{equation}
\ell_k
=
\psi\!\left(
\log
\sqrt{
\frac{
\|x_{k+t}-x^\star\|_2^2 + \|\lambda_{k+t}-\lambda^\star\|_2^2 + \varepsilon
}{
\|x_k-x^\star\|_2^2 + \|\lambda_k-\lambda^\star\|_2^2 + \varepsilon
}
}
\right),
\label{eq:stage_loss}
\end{equation}
The relaxation policy is then designed to choose a sequence of $\Gamma_k$ that minimizes the cumulative loss over a finite-horizon, on average over the set of example quadratic programs. In \eqref{eq:stage_loss}, the shaping function
\begin{equation}
\psi(r)=\mathrm{softplus}(r+0.5)-0.5
\end{equation}
behaves as $\psi(r) \approx 0$ when $r \ll 0$ and $\psi(r) \approx r$ when $r > 0$, thereby saturating the loss when convergence is already rapid and preserving full penalization when residuals increase. 
}


Each training batch is cold-started from $x=z=\lambda=0$, which is the same as the default strategy of OSQP. To stabilize learning, feature normalization statistics are estimated once from short baseline rollouts with the default OSQP relaxation parameter and then frozen for training and inference.


We consider the training of two variants of relaxation policies.  
The first variant, which we denote as \emph{scalar policy}, predicts a matrix  in the form $$\Gamma_k= \alpha_k(\phi_k,\theta)I\,.$$ 

Here, $\alpha_k:\mathbb{R}^d  \times \mathbb{R}^D\mapsto \mathbb{R}$ is a trainable policy that outputs a scalar relaxation parameter based on a set of learnable parameters $\theta \in \mathbb{R}^D$ and a vector of observed features $\phi_k \in \mathbb{R}^d$ defined as
\begin{align*}
\phi_{k} &= \Bigl(
\log\|r\|_\infty,\;
\log\|s\|_\infty,\; \notag\\
&\phantom{{}={}\bigl(}
\log\tfrac{\|r\|_\infty}{\|r_{-}\|_\infty+\varepsilon},\;
\log\tfrac{\|s\|_\infty}{\|s_{-}\|_\infty+\varepsilon},\;
\log\tfrac{\|r\|_\infty}{\|s\|_\infty+\varepsilon}
,\; \log \rho\Bigr)^\top\!.
\end{align*}

Above, $r$ and $s$  denote the current primal and dual residuals at iteration $k$,  $\rho$ denotes the penalty parameter at iteration $k$, while $r_{-}$ and $s_{-}$ denote the corresponding residuals $t$~iterations before, and $\epsilon>0$ is a small value.

The second variant, which we denote as \emph{vector policy}, predicts instead a general diagonal matrix in the form $$\Gamma_k = \operatorname{diag}(\vec{\alpha}_k(\phi_k,\phi'_k,\varphi))\,,$$ where $\vec{\alpha}_k:\mathbb{R}^d\times \mathbb{R}^{d_2}\times \mathbb{R}^{E}\mapsto \mathbb{R}^p$ is a vector-valued policy,  $\varphi\in \mathbb{R}^E$ is a set of trainable parameters, and $\phi'_k$ is an additional set of per-constraint features defined as
\begin{align*}
\phi'_k &= \Bigl(
\log(z_i{-}l_i),\; \log(u_i{-}z_i),\; \log|r_i|,\;
\mathrm{sign}(r_i),\; \notag\\
&\phantom{{}={}\bigl(}
\log|\lambda_i|, 
\log\tfrac{|r_i|}{|r_{i,-}|+\varepsilon},\;
\log\rho_i,\; \|A_i\|_\infty
\Bigr)^\top\!,
\end{align*}
where $l$ and $u$ denote the lower and upper bounds at iteration $k$, $z$ denotes the auxiliary
variable at iteration $k$, and $A_i$ denotes the $i$-th row of the constraint matrix. All logarithmic features are clamped to $[-6,6]$ before feature normalization.

Since OSQP's diagonal penalty entries take only two values---$\rho$ for inequality constraints and $10^3\rho$ for equality constraints---the global scalar $\rho$ is the natural summary feature for the scalar policy. In the vector policy, however, per-constraint $\log \rho_i$ is already encoded in $\phi'_k$, which makes $\log \rho$ in $\phi_k$ redundant and we therefore drop it.

Both variants of the relaxation policy use the same multilayer perceptron (MLP).  Each hidden layer is followed by Layer Normalization and use Exponential Linear Unit (ELU) as the activation function.
The linear output layer is connected to a sigmoid function to map the output in $[\alpha_{\min}, \alpha_{\max}]$.  
We impose
$(\alpha_{\min}+\alpha_{\max})/2 = 1.6$ and initialize the output layer near
zero so that the initial prediction is close to $\alpha=1.6$, matching the
default OSQP relaxation parameter and providing a stable starting point for
training. By sharing the weights across rows, the vector policy is row-equivariant, and therefore  both policies are compatible with varying numbers of constraints and different problem sizes.

\section{Experiments}\label{sec:experiments}

We evaluated the proposed adaptive over-relaxation scheme in (i)~benchmark quadratic program families from the OSQP benchmark suite, where we trained on small problem instances and assessed generalization to larger ones, and (ii)~a Model Predictive Control (MPC) benchmark, with fixed parameters and changing initial states. In both settings, the key practical advantage is that updating  $\Gamma_k$ at every iteration is refactorization-free, so the learned policy can react to the solver state online at negligible cost. The end-to-end computational overhead then depends only on  evaluating the output of the neural-network relaxation policy, which is lightweight for the scalar variant and modest for the vector variant.


\subsection{Experimental setup}

In our experiments, we trained the proposed over-relaxation policies in Multi Layer Perceptron (MLP) form for ADMM under several configurations:
\begin{enumerate}
\item \emph{Scalar policy with adaptive $\rho$}: the proposed trained scalar policy combined with OSQP's adaptive-$\rho$ heuristic.
\item \emph{Vector policy with adaptive $\rho$}: the proposed trained vector policy combined with OSQP's adaptive-$\rho$ heuristic.
\item \emph{Scalar policy without adaptive $\rho$}: the proposed trained scalar policy with fixed $\rho = 0.1$.
\item \emph{Vector policy without adaptive $\rho$}: the proposed trained vector policy with fixed $\rho = 0.1$.
\end{enumerate}
We benchmarked these configurations against standard OSQP, where the default relaxation matrix \luca{$\Gamma_k =  1.6 I$} is used, considering both cases in which $\rho$ is chosen adaptively and is kept fixed as $\rho=0.1$.


We trained a separate policy for each problem family. The scalar and \luca{vector} policies share the same MLP architecture described in Section~\ref{sec:method}, with two hidden layers of 64 neurons each, and differ only in input dimension. Both predict $\Gamma_k$ in $[\alpha_{\min}, \alpha_{\max}] = [1.25, 1.95]$. We trained for 1000 epochs using differentiable unrolled OSQP with stage length $t = 10$. For the benchmark-suite experiments we used batch size 16, while for the MPC experiment we used batch size 10. We used AdamW \cite{loshchilov2017decoupledadamw} with learning rate $5 \times 10^{-5}$ and cosine decay. Feature normalization statistics were computed once from short baseline rollouts with $\Gamma_k=1.6I$ for all $k$ and held fixed thereafter. Training was carried out on the University of Oxford Advanced Research Computing (ARC) facility \cite{richards_university_2015}, and benchmarking was performed on a MacBook Pro with an Apple M4 CPU and 16\,GB memory.

\begin{table*}[t]
\centering
\caption{Benchmark-suite results: mean iterations and runtime (seconds).}
\label{tab:main_results}
\vspace{-3pt}

\renewcommand{\arraystretch}{1.5}

\resizebox{\textwidth}{!}{%
\begin{tabular}{c | c | ccccc}
\hline

$\rho$ adaptation & Policy & Random QP & Portfolio & Lasso & SVM & Control \\

\hline

\multirow{3}{*}{\shortstack[c]{No $\rho$\\adaptation}}
& OSQP   & 321.53 (0.803) & 454.64 (1.981) & 203.39 (1.972) & 654.10 (6.715) & 1331.41 (7.350) \\ \cdashline{2-7}
& Scalar policy & \textbf{263.34} \textbf{(0.653)} & \textbf{372.95} \textbf{(1.721)} & 172.64 \textbf{(1.667)} & 547.07 \textbf{(6.000)} & 1096.20 \textbf{(6.043)} \\
& Vector policy & 269.56 (0.748) & 373.13 (1.886) & \textbf{172.44} (1.761) & \textbf{545.49} (6.332) & \textbf{1092.95} (6.196) \\
\hline

\multirow{5}{*}{\shortstack[c]{With $\rho$\\adaptation}}
& OSQP            & 321.53 (0.808) & 272.30 (1.645) & 175.58 (3.644) & 346.32 (7.959) & 127.48 \textbf{(1.518)} \\ \cdashline{2-7}
& Scalar policy (iter)   & \textbf{262.92} \textbf{(0.662)} & \textbf{233.58} \textbf{(1.468)} & 175.62 (3.847) & \textbf{341.45} (7.949) & \textbf{126.86} (1.551) \\
& Scalar policy ($\rho$) & \textbf{262.92} (0.666) & 324.26 (1.858) & 179.96 (7.042) & 462.78 \textbf{(7.059)} & 261.26 (2.438) \\
& Vector policy (iter)   & 268.41 (0.755) & 251.16 (1.595) & 180.09 (3.654) & 343.53 (7.642) & 140.13 (1.657) \\
& Vector policy ($\rho$) & 265.65 (0.736) & 366.33 (2.038) & \textbf{174.76} \textbf{(1.772)} & 425.22 (7.836) & 218.13 (2.100) \\

\hline
\end{tabular}%
}

\vspace{6pt}
\raggedright
{\footnotesize
OSQP: unmodified baseline. Scalar policy: $\Gamma_k= \alpha_kI$. Vector policy: $\Gamma_k = \operatorname{diag}(\vec{\alpha}_k)$.
(iter) / ($\rho$): checkpoint selected by lowest mean iteration count / fewest $\rho$ updates.
Bold marks the best result per column within each $\rho$-adaptation group. The learned policies improve iteration count over the baseline in every family and $\rho$-adaptation setting.
}
\end{table*}

For models trained with adaptive $\rho$, we saved two checkpoints based on validation-set performance: one selected by the lowest mean iteration count, and one selected by the lowest mean number of $\rho$ updates. This is useful because a method with fewer OSQP iterations does not necessarily achieve lower wall-clock time when $\Gamma_k$ updates trigger more $\rho$ updates and additional refactorizations.

In all experiments, we ran all policies with the same low-precision tolerances
\[
\epsilon_{\mathrm{abs}}=\epsilon_{\mathrm{rel}}=10^{-3},
\]
with the stopping-tolerance definitions following OSQP \cite{stellato2020osqp}. All problem instances were successfully solved to this prescribed accuracy.

The learned policy is queried once every $t=10$ OSQP iterations, consistent with the interval used for training. To satisfy the summable-change requirement underlying the convergence result in Section~\ref{sec:convergence_theory}, we disabled further $\Gamma_k$ updates after 500 OSQP iterations and kept the last predicted value fixed for the remainder of the solve. For each test instance, we recorded both the total number of OSQP iterations and the wall-clock runtime, which includes policy inference and all solver overhead.

\subsection{Benchmark-suite experiments}


First, we evaluated the over-relaxation policy on parametric quadratic program families from the OSQP benchmark suite\footnote{\url{https://github.com/osqp/osqp_benchmarks}}. In this benchmark-suite study, we assessed transfer across problem dimensions within a fixed problem family. Accordingly, for this part only, training and validation used a fixed size for each family, while testing used larger instances from the same family.

We report results on: Random QP, Portfolio Optimization, Lasso, Support Vector Machine (SVM), and Control benchmark QPs. These problem families are representative of repeated parametric optimization settings and are also used in prior work on learned acceleration of OSQP-style solvers \cite{stellato2020osqp, ichnowski2021acceleratingrlqp}.

For training and validation, we used a fixed size for each family: 250 for Random QP, 20 for Portfolio, Lasso, and SVM, and 100 for the Control benchmark family, with 160 instances for training and 80 instances for validation per family. For testing, we used larger instances: sizes (500, 501, 503, 507, 515, 531, 562, 625, 750, 999) for Random QP, sizes (200, 201, 203, 205, 210, 219, 235, 262, 311, 399) for the Control benchmark family, and sizes (50, 51, 52, 54, 58, 63, 72, 87, 110, 149) for Portfolio, Lasso, and SVM, with 10 test instances per size and 100 test instances per problem family.


Table~\ref{tab:main_results} reports mean OSQP iterations and wall-clock runtime (in seconds) across 100 test instances per family for the benchmark-suite setting.

With fixed $\rho$, the over-relaxation policies consistently outperform baseline OSQP across all five families in both iteration count and wall-clock time, with iteration reductions of 15--18\%. Because $\Gamma_k$ updates are refactorization-free, the iteration savings translate directly into runtime gains. The scalar policy achieves the best runtime on every family; the vector policy occasionally yields marginally fewer iterations (e.g.\ Lasso, SVM, Control), but its per-row inference overhead can offset this advantage in wall-clock time.

\begin{table}[t]
\centering
\caption{MPC results: Mean iterations and runtime (seconds).}
\label{tab:mpc_varying_x0}
\renewcommand{\arraystretch}{1.5}
\resizebox{0.47\textwidth}{!}{%
\begin{tabular}{c|c|c}
\hline
$\rho$ adaptation & Policy & MPC with varying $x_0$ \\
\hline
\multirow{3}{*}{\shortstack[c]{No $\rho$\\adaptation}}
& OSQP    & 890.7 (0.745) \\ \cdashline{2-3}
& Scalar \luca{policy} & 733.4 \textbf{(0.621)} \\
& Vector \luca{policy} & \textbf{730.9} (0.691) \\
\hline
\multirow{5}{*}{\shortstack[c]{With $\rho$\\adaptation}}
& OSQP             & 141.4 (0.169) \\ \cdashline{2-3}
& Scalar \luca{policy} (iter)    & 133.5 \textbf{(0.168)} \\
& Scalar \luca{policy} ($\rho$)  & 139.7 (0.176) \\
& Vector \luca{policy} (iter)    & 126.5 (0.174) \\
& Vector \luca{policy} ($\rho$)  & \textbf{125.8} (0.183) \\
\hline
\end{tabular}
}
\end{table}

With adaptive $\rho$, the over-relaxation policies still reduce iteration counts on most families---most notably Random~QP and Portfolio. Runtime gains are more nuanced: changes in the $\Gamma_k$ trajectory can alter the heuristic $\rho$ rule, triggering additional refactorizations. 
Different checkpoint strategies show this trade-off: the iteration-selected checkpoint (iter) minimizes iterations but may incur more $\rho$ updates, while the $\rho$-selected checkpoint ($\rho$) sacrifices iteration count for fewer refactorizations. A striking example is Lasso, where the vector policy with the $\rho$-selected checkpoint  halves the runtime relative to baseline OSQP (1.772\,s vs.\ 3.644\,s) at a comparable iteration count, demonstrating that a learned over-relaxation policy can implicitly regularize the $\rho$-update frequency.

In summary, across all five families, learning over-relaxation policies reliably reduces iteration counts compared to standard OSQP. The clearest runtime gains arise when $\rho$ is fixed, since $\Gamma_k$ adaptation is entirely refactorization-free. With adaptive $\rho$, iteration improvements persist, and careful checkpoint selection can convert them into runtime gains by controlling the interaction with OSQP's $\rho$-update heuristic.

\subsection{MPC with varying initial states}

Next, we evaluated the learned policies on an MPC experiment that more directly reflects the repeated-solve setting motivating this work. In MPC, the same system dynamics, cost matrices, and constraints are fixed across solves and thus can be learned from examples, while only the initial state $x_0$ varies. Accordingly, all instances in this experiment share the same randomly generated linear dynamics $A$ and $B$ with state dimension $n_x=100$ and input dimension $n_u=50$, the same quadratic cost matrices $Q$, $R$, $Q_T$, the same horizon length $T=10$, and the same state/input box constraints $x_{\min}, x_{\max}, u_{\min}, u_{\max}$. Only the initial state $x_0$ varies from instance to instance. 
We used 160 training instances, 80 validation instances, and 100 test instances generated from disjoint random seeds.
Table~\ref{tab:mpc_varying_x0} reports the corresponding results.

With fixed $\rho$, over-relaxation policies reduce the mean iteration count by 18\% and the best runtime by 17\%. As in the benchmark suite, the refactorization-free nature of $\Gamma_k$ updates ensures that iteration savings translate directly into wall-clock gains.

With adaptive $\rho$, the baseline is already strong, yet over-relaxation policies still reduce iterations by up to 11\%. Runtime differences are small in this regime. The scalar policy matches the baseline wall-clock time while the
vector policy trades a slight runtime increase for a larger iteration
reduction, consistent with the per-row inference overhead observed
in the benchmark-suite experiments.

To conclude, the over-relaxation policies are effective in the repeated-solve MPC regime as well.
When $\rho$ is fixed, the learned policies yield substantial gains in
both iterations and runtime. Under adaptive $\rho$, they consistently
improve iteration counts, with wall-clock differences that are small
and policy-dependent.

\section{Conclusion}


We studied how to accelerate ADMM for parametric convex programs by learning to adapt the relaxation parameter $\alpha$ online. The main insight is that $\alpha$ is an especially attractive adaptation target: in OSQP-like implementations, updating $\alpha$ does not trigger the matrix refactorizations required by penalty updates, so it can be adapted frequently at negligible solver-side cost. Our convergence result showed that such time-varying per-constraint adaptation of both $\alpha$ and $\rho$ is compatible with asymptotic convergence guarantees, and our experiments confirmed that the learned policies consistently reduce iteration counts, with the clearest wall-clock gains arising when refactorization overhead is eliminated entirely. Future work includes jointly learning coordinated adaptation policies for both $\rho$ and $\alpha$, benchmarking the framework beyond quadratic programs to broader classes of structured convex problems, and investigating applications in large-scale distributed optimization, where ADMM remains a central computational tool and low-overhead parameter adaptation could be especially valuable.

\section{ACKNOWLEDGMENTS}

The authors would like to acknowledge the use of the University of Oxford Advanced Research Computing (ARC) (https://doi.org/10.5281/zenodo.22558) facility in carrying out this work.


\balance
\bibliographystyle{IEEEtran}
\bibliography{bibliography}






\end{document}